\documentclass[12pt]{amsart}
\usepackage{amsfonts, amsbsy, amsmath, amssymb}

\hoffset -1.5cm

\voffset -1cm

\textwidth 15.5truecm

\textheight 22.5truecm

\usepackage[T1]{fontenc}
\usepackage[utf8]{inputenc}
\usepackage{lmodern}
\usepackage{amsmath, amsthm, amssymb,amscd, mathrsfs, amsfonts, mathtools,tikz-cd}
\usepackage{relsize}
\usepackage{euler}
\usepackage{times}
\usepackage[all]{xy}
\usepackage{todonotes}
\usepackage{xcolor}
\usepackage[bookmarks=true,
            bookmarksnumbered=true, breaklinks=true,
            pdfstartview=FitH, hyperfigures=false,
            plainpages=false, naturalnames=true,
            colorlinks=true,pagebackref=true,
            pdfpagelabels]{hyperref}
\newtheorem{theorem}{Theorem}[section]

\newtheorem{corollary}[theorem]{Corollary} 

\newtheorem{question}[theorem]{Question}

\usepackage[lite]{amsrefs}

\renewcommand{\PrintDOI}[1]{\href{http://dx.doi.org/\detokenize{#1}}{doi: \detokenize{#1}}%
  \IfEmptyBibField{pages}{, (to appear in print)}{}}

\def\commutatif{\ar@{}[rd]|{\circlearrowleft}}

\newtheorem{thm}{Theorem}[section]
\newtheorem{proposition}[thm]{Proposition}
\newtheorem{lemma}[thm]{Lemma}

\theoremstyle{definition}

\theoremstyle{remark}

\newtheorem{example}[thm]{Example}

\allowdisplaybreaks





\usepackage{bbm}





\title{ Circular orderability and quandles }

\author[Idrissa Ba]{Idrissa Ba}
\address{Department of Mathematics\\
Marianopolis College\\
Westmount\\
QC H3Y 1X9} 
\email{ba162006@yahoo.fr}

\author[Mohamed Elhamdadi]{Mohamed Elhamdadi} 
\address{Department of Mathematics, 
University of South Florida, Tampa, FL 33620, U.S.A.} 
\email{emohamed@math.usf.edu} 
\urladdr{ http://shell.cas.usf.edu/~emohamed/}

\begin{document}

 \subjclass[2010]{Primary: 57M07, 06F15, 20F99.}
\keywords{circular orderability, quandles, left-orderability}

\begin{abstract}
 In this paper, we introduce the notion of circular orderability for quandles. We show that the set all right (respectively left) circular orderings of a quandle is a compact topological space. We also show that the space of right (respectively left) orderings of a quandle embeds in its space of right (respectively left) circular orderings.  Examples of quandles that are not left circularly orderable and examples of quandles that are neither left nor right circularly orderable are given.
\end{abstract}

\maketitle 
\section{Introduction}
Quandles are algebraic structures whose axioms are motivated by the three Reidemeister moves in classical knot theory.  They were introduced independently by Joyce \cite{Joyce} and Matveev \cite{Matveev} in the early 1980's.  Since then they have been used to construct invariants of knots in the $3$-space and knotted surfaces in $4$-space (see for example \cite{EN} for more on quandles).  However, the notion of a quandle can be traced back to the 1940's in the work of Mituhisa Takasaki \cite{Takasaki}.  Joyce and Matveev introduced the notion the fundamental quandle of a knot leading to their main theorem which states that  two knots $K$ and $K'$ are equivalent (up to reverse and mirror image) if and only if the fundamental quandles $Q(K)$ and  $Q(K')$ are isomorphic.  This theorem turns the topological problem of classification of knots into the algebraic problem of classification of quandles.  Recently, there has been many works on classifying some families of quandles, such as connected quandles, Alexander quandles, medial quandles etc. Quandles also have been investigated recently from other point of views such as relations to Lie algebras \cite{CCES1, CCES2},  Hopf algebras \cite{AG, CCES2}, quasigroups and Moufang loops \cite{Elhamdadi}, ring theory \cite{EFT}, Yang-Baxter equation and its homology \cite{CES} and topology\cite{EM}.   Orderability of quandles appeared in \cite{BPS2, DDHPV} where it was shown for example  that  conjugation quandles (respectively core quandles) of bi-orderable groups are right orderable (respectively left orderable) quandles. The set of left ordering on a group $G$ can be given a topology, and this space is called the space of left orderings.  In \cite{Sikora} it was shown that the space of left orderings of a group is compact.  The analogous of this theorem was proved for magmas in general (and quandles in particular) in \cite{DDHPV }.  

Recently, the notions of circular orderability and left-orderability have
been intensively studied in low dimensional topology, particularly for the fundamental group of 3-manifolds (see \cite{[BaC1}, \cite{[BaC2}, \cite{BS},  \cite{BRW} and \cite{Cal}). In \cite{[BaC1} the authors have shown that a compact, connected, $P^2$-irreducible $3$-manifold has a left circularly orderable fundamental group if and only if there exists a finite cyclic cover with left-orderable fundamental group. A consequence of this result is that, if a $3$-manifold admits a cooriented taut foliation then it has a finite cyclic cover with left-orderable fundamental group, and this give a characterization of a topological property of a $3$-manifold (taut foliation) by an algebraic property to its fundamental group (left-orderability and circular orderability). This may also be a motivation to study some algebraic properties of quandles (circular orderability and orderability).

In this article we  introduce the notion of circular orderability for quandles. We show that the set all right (respectively left) circular orderings of a quandle is a compact topological spaces. 

Let $Q$ be a quandle, we denote by $RCO(Q)$ (respectively $LCO(Q)$) the set of right circular orderings on $Q$ (respectively the set of left circular orderings on $Q$)

\begin{theorem}
    The space $RCO(Q)$ (respectively $LCO(Q)$) is compact.
\end{theorem}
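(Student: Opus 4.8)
The plan is to adapt the classical argument of Sikora for spaces of left orderings of groups: realize $RCO(Q)$ as a closed subspace of a product of finite discrete spaces and invoke Tychonoff's theorem. First I would encode each right circular ordering of $Q$ as a ternary cocycle $c\colon Q^3\to\{-1,0,1\}$, where $c(x,y,z)=0$ precisely when two of the three entries coincide, and $c(x,y,z)=\pm 1$ records the cyclic orientation of the distinct triple $(x,y,z)$. The circular-order axioms then translate into the conditions: (i) the nondegeneracy just described; (ii) the alternating/cyclic identity $c(x,y,z)=c(y,z,x)=-c(y,x,z)$; (iii) the cocycle (transitivity) relation $c(y,z,w)-c(x,z,w)+c(x,y,w)-c(x,y,z)=0$ for every quadruple $(x,y,z,w)$; and (iv) right invariance $c(x\rt a,\,y\rt a,\,z\rt a)=c(x,y,z)$ for all $a\in Q$, reflecting that the right translations of the quandle preserve the ordering. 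This identifies $RCO(Q)$ with the set of all $c\in\{-1,0,1\}^{Q^3}$ satisfying (i)--(iv).

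Next I would equip $\{-1,0,1\}^{Q^3}$ with the product topology, under which it is compact by Tychonoff's theorem, since each factor $\{-1,0,1\}$ is finite, hence compact and discrete. The topology on $RCO(Q)$ is then the subspace topology, whose subbasic open sets are obtained by prescribing the value of $c$ on finitely many triples; one should check that this agrees with the intrinsic topology on circular orderings used elsewhere in the paper, so that compactness transfers. Granting this, it suffices to prove that $RCO(Q)$ is \emph{closed} in the product, because a closed subset of a compact space is compact.

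To establish closedness, I would verify that each of the conditions (i)--(iv) cuts out a closed subset. The crucial observation is that every such condition has \emph{finite support}: for a fixed triple, quadruple, or pair $(x,y,z,a)$, it constrains only finitely many coordinates of $c$ (indeed at most four). Each single instance of (i)--(iv) is therefore a condition on a finite product of discrete factors and is consequently clopen, and the full requirement is an intersection over all triples, quadruples, and all $a\in Q$ of such closed sets, hence closed. Thus $RCO(Q)$ is closed and therefore compact. The argument for $LCO(Q)$ is identical, replacing the right-invariance condition (iv) by the corresponding left-invariance condition $c(a\rt x,\,a\rt y,\,a\rt z)=c(x,y,z)$.

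The step I expect to demand the most care is the invariance condition (iv). Expressing it as a closed condition is routine once its finite support is noted, but one must take the invariance with respect to the correct family of translation maps, namely the right multiplications $x\mapsto x\rt a$, which are precisely the quandle automorphisms and hence exactly the maps that a right circular ordering must respect. The remaining subtlety is bookkeeping: confirming that the encoding $c\mapsto$ (its associated circular order) is a bijection onto $RCO(Q)$ and a homeomorphism for the chosen topologies, so that compactness of the closed set of cocycles indeed passes to $RCO(Q)$ itself.
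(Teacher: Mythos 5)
Your proposal is correct and follows essentially the same route as the paper: both realize $RCO(Q)$ (respectively $LCO(Q)$) as a subset of $\{-1,0,1\}^{Q^3}$ cut out by the nondegeneracy, cocycle, and invariance conditions, observe that each condition constrains only finitely many coordinates and hence defines a closed set, and conclude compactness from Tychonoff's theorem plus closedness. The only differences are cosmetic: your condition (ii) is redundant (antisymmetry already follows from (i) and (iii)), and your worry about the encoding being a homeomorphism is vacuous in the paper's setup, since there a circular ordering \emph{is} by definition such a function $c$ and the topology on $RCO(Q)$ is by definition the subspace topology.
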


We also show that the space of right (respectively left) orderings \cite{DDHPV} of a quandle embeds in its space of right (respectively left) circular orderings.  

\begin{theorem} If $Q$ is a quandle, then the space of right (respectively left) orderings  of $Q$ is a subspace of its space of right (respectively left) circular orderings.
\end{theorem}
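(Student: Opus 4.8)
\emph{Setup and the map.} Write $RO(Q)$ for the space of right orderings of $Q$ (\cite{DDHPV}), realized as a subspace of $\{-1,0,1\}^{Q\times Q}$ by recording, for each pair, the sign $s_{<}(a,b)$ equal to $+1$ if $a<b$, to $-1$ if $b<a$, and to $0$ if $a=b$; likewise $RCO(Q)\subseteq\{-1,0,1\}^{Q^{3}}$. The plan is to produce an explicit topological embedding $\Phi\colon RO(Q)\hookrightarrow RCO(Q)$ sending an ordering to the circular ordering obtained by ``wrapping it onto a circle''. Concretely, for a right ordering $<$ define $c_{<}\colon Q^{3}\to\{-1,0,1\}$ by $c_{<}(a,b,c)=0$ whenever two of $a,b,c$ coincide, and otherwise $c_{<}(a,b,c)=\operatorname{sgn}(\pi)$, where $\pi$ is the permutation rearranging $(a,b,c)$ into strictly $<$-increasing order; equivalently $c_{<}(a,b,c)=+1$ exactly when $a<b<c$, $b<c<a$, or $c<a<b$. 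First I would check that $c_{<}$ is a right circular ordering: nondegeneracy is built into the definition, the cocycle identity is the standard verification that the sign of a sorting permutation satisfies the relation $c(b,c,d)-c(a,c,d)+c(a,b,d)-c(a,b,c)=0$ on any four points, and right-invariance is immediate, since each right translation $R_{d}\colon x\mapsto x\rt d$ is an order-automorphism of $(Q,<)$ and hence preserves the $<$-order of every triple, giving $c_{<}(a\rt d,b\rt d,e\rt d)=c_{<}(a,b,e)$. Thus $\Phi(<):=c_{<}$ lands in $RCO(Q)$.

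\emph{Continuity and reduction to injectivity.} Each coordinate $c_{<}(a,b,c)$ of $\Phi(<)$ is a fixed function of the three coordinates $s_{<}(a,b)$, $s_{<}(b,c)$, $s_{<}(a,c)$ of $<$, so the preimage under $\Phi$ of any subbasic open set $\{\,c : c(a,b,c)=\varepsilon\,\}$ is cut out by these three coordinates and is therefore open; hence $\Phi$ is continuous. Now $RO(Q)$ is compact (the quandle analogue of Sikora's theorem, \cite{DDHPV}) and $RCO(Q)$ is Hausdorff, being a subspace of a product of discrete spaces, so a continuous injection from the former to the latter is automatically a homeomorphism onto its image. Consequently the entire statement reduces to proving that $\Phi$ is injective.

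\emph{Injectivity---the crux.} Here one must recover the linear ordering $<$ from the circular ordering $c_{<}$, and the obstruction is classical: two total orders induce the same circular ordering precisely when one is obtained from the other by cutting at a Dedekind cut $\gamma$ and regluing the upper part below the lower part, so a priori $\Phi$ could identify $<$ with one of its regluings. The point to exploit is that such a regluing is essentially never again right-invariant. Suppose $<_{1},<_{2}\in RO(Q)$ satisfy $c_{<_{1}}=c_{<_{2}}$, so that $<_{2}$ is the regluing of $<_{1}$ at some cut $\gamma$; applying a right translation $R_{d}$, an order-automorphism of both orderings, carries the regluing at $\gamma$ to the regluing at $R_{d}(\gamma)$, and invariance of $<_{2}$ then forces $R_{d}(\gamma)=\gamma$ for every $d$, i.e.\ the cut is fixed by the whole inner automorphism group. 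I expect this to be the main obstacle, because the maps $R_{d}$ need not move $\gamma$ in general---on a trivial quandle every $R_{d}$ is the identity---so this step must draw on genuine quandle structure (transitivity of the inner action, as for connected quandles, or an analogous hypothesis excluding a globally fixed nontrivial cut) in order to conclude that $\gamma$ is trivial and hence $<_{1}=<_{2}$. Once injectivity is secured, the reduction above promotes $\Phi$ to the desired embedding, and the left-handed case follows by the symmetric argument with left translations.
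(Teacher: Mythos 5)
Your topological half is exactly the paper's argument: the paper likewise establishes continuity of $<\,\mapsto c_<$ by computing the preimage of each subbasic set $L_S$ (respectively $R_S$), $S=(x,y,z)\in Q^3\setminus\Gamma(Q)$, as $(V_{(x,y)}\cap V_{(y,z)})\cup(V_{(y,z)}\cap V_{(z,x)})\cup(V_{(z,x)}\cap V_{(x,y)})$, and then invokes compactness of $LO(Q)$ and $RO(Q)$ (from \cite{DDHPV}) together with Hausdorffness of $\{-1,0,1\}^{Q^3}$ to conclude that a continuous injection from a compact space to a Hausdorff space is an embedding. Up to and including that reduction, you and the paper agree completely.

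The divergence is the injectivity step, and there you have put your finger on a genuine defect: the paper never proves injectivity at all---it simply calls $i$ and $j$ ``the inclusion map'' and later ``the injective maps $i$ and $j$''---whereas you correctly observe that two orderings induce the same circular ordering exactly when one is obtained from the other by cutting and regluing, and that invariance only forces the cut to be invariant under the translations. Moreover, your parenthetical remark about trivial quandles is in fact a counterexample to the right-handed statement itself: on the trivial quandle $T_n$ with $x*y=x$ and $n\ge 2$, every $R_d$ is the identity, so all $n!$ linear orderings are right orderings and all $(n-1)!$ circular orderings are right-invariant; the map $<\,\mapsto c_<$ is then $n$-to-$1$, and since $|RO(T_n)|=n!>(n-1)!=|RCO(T_n)|$, no embedding of $RO(T_n)$ into $RCO(T_n)$ can exist (already for $n=2$, a two-point discrete space cannot embed into a single point). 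So the gap you flagged is not something a cleverer argument would close: the right-ordering half of the theorem fails without an additional hypothesis of precisely the kind you propose (no nontrivial cut fixed by every right translation, e.g.\ some transitivity of the inner action). Note that this example does not contradict the left-ordering half, since a trivial quandle with at least two elements is not left orderable ($s\prec t$ would force $r=r*s\prec r*t=r$), so $LO(T_n)=\emptyset$; your cut-fixing argument with the maps $L_r$ is the right starting point there, but the paper supplies no injectivity proof in that case either. In short, your proposal is incomplete as a proof, but it is incomplete exactly where the paper's own proof is wrong, and your diagnosis of the obstruction is correct.
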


Given a group with a circular ordering which is both right and left invariant, we prove that the conjugation quandle is right circularly orderable and more strongly we shown that the conjugation quandle is right orderable. We also give examples of quandles that are not left circularly orderable and examples of quandles that are neither left nor right circularly orderable.

The following is the organization of this article.  In section~\ref{Review}, we recall the basics of quandles and also orderability.  Section~\ref{Circular} introduces the notion of circular orderability of quandles, and show that a right (respectively left) orderable quandle is right (respectively left) circularly orderable. In this section,
it is also shown that  the conjugation quandle of a bi-orderable group is a right orderable quandle and we give examples of non-left circularly orderable quandles, and explicit examples of quandles of small cardinalities that are neither left nor right circularly orderable.  Section~\ref{Space} deals with the topology of the set of right (respectively left) circular orderings on quandles.  Precisely, we introduce a topology on the set of right (respectively left) circular orderings on quandles and prove that it is compact.

\section{Review of quandles and orderability}\label{Review}

A {\it quandle} is a non-empty set $Q$ with a binary operation $*$ which satisfies the following conditions:
\begin{enumerate}
	\item For any $s\in Q$, $s*s=s$;
	\item For any $s_1, s_2\in Q$, there exists a unique $s_3\in Q$ such that $s_3*s_2=s_1$; and
	\item For any $s_1, s_2, s_3\in Q$, 
	$(s_1*s_2)*s_3=(s_1*s_3)*(s_2*s_3)$.
\end{enumerate}
Let consider right multiplications $R_s: Q \rightarrow Q$ given by $t \mapsto t * s$.  Then axioms (2) and (3) state that $R_s$ are automorphisms of the quandle $Q$ and axiom (1) states that $s$ is a fixed point of $R_s$.
An element $e$ of a quandle $Q$ is called {\it stabilizer element} if $s*e=s$ for any $s\in Q$. 

A quandle $Q$ is called {\it trivial} if all its elements are stabilizer elements.
A trivial quandle can have an arbitrary number of element, this is one difference between a group and a quandle.

By the conditions $(2)$ and $(3)$ of the definition of quandle for any $b\in Q$, there exists an automorphism $R_b:Q\longrightarrow Q$ defined by $R_b(s)=s*b$, for any $s\in Q$. We call $id_Q$ the identity automorphism of $Q$. A quandle is called {\it involutary} if $R_s\circ R_s=id_Q$ for any $s\in Q$.

By the conditions $(2)$ and $(3)$ of the definition of quandle, there exists a dual binary operation $*^{-1}$ defined by $s*^{-1}r=t$ if $s=t*r$ for any $r, s$ and $t\in Q$. Hence, we have also that $(s*r)*^{-1}r=s$ and $(s*^{-1}r)*r=s$ for any $r, s\in Q$, this is called right cancellation. Therefore, by the condition $(1)$ of the definition of quandle and right cancellation we have that $s*^{-1}s=s$ for any $s\in Q$.

A quandle is called {\it latin} if for any $s\in Q$ the map $L_s:Q\longrightarrow Q$ defined by $L_s(t)=s*t$ is a bijection for any $t\in Q$. A quandle is called {\it semi-latin} if the map $L_s$ is injective for any $s\in Q$.

A subquandle of a quandle, $Q$, is a subset $X\subset Q$ which is closed under the quandle operation.

The following are some examples of quandle. 
\begin{itemize}
	\item Any set with the binary operation $x*y=x$ is a quandle called \emph{trivial} quandle.
	\item Let $G$ be a group. The set $S=G$ with the binary operation $*$ defined by $g*h=h^{-1}gh$ is a quandle, denoted Conj$(G)$, and called \emph{conjugation} quandle of $G$.
	\item The set $T=G$, where $G$ is a group, with the binary operation $*$ defined by $g*h=hg^{-1}h$ is a quandle, denoted by Core$(G)$, and called \emph{core} quandle of $G$.  If furthermore $G$ is an abelian group then this quandle is called \emph{Takasaki } quandle \cite{Takasaki}. 
	\item If $\phi$ is an automorphism of a group $G$, then the set $R=G$ with the binary operation $*$ defined by $g*h=\phi(gh^{-1})h$ is a quandle, denoted by Aut$(G, \phi)$, and called \emph{generalized Alexander} quandle of $G$ with respect to $\phi$.
\end{itemize}

Recall that a group $G$ is called \emph{left-orderable} (respectively right-orderable) if there exists a strict total ordering $<$ on $G$ such that $g<h$ implies $fg<fh$ (respectively $gf<hf$) for all $f, g, h \in G$, the relation $<$ is called a \emph{left-ordering} of $G$.  A left-orderable (respectively right-orderable) quandle was defined in the same fashion in \cite{BPS}. A quandle $(Q,*)$ is called left-orderable (respectively right-orderable) if there exists a strict total ordering $\prec$ on $Q$ such that $s\prec t$ implies $r*s\prec r*t$ (respectively $s*r\prec t*r$) for all $r, s, t \in Q$, the relation $\prec$ is called a \emph{left-ordering} (respectively right-ordering) on $Q$.
The notion of a homomorphism between two ordered quandles is natural.  Let $(X,*, \prec_X )$ and $(Y,\diamond, \prec_Y)$ be two right ordered quandles.  A map $f:X \rightarrow Y$ is called an order preserving homomorphism of right ordered quandles if $f$ is a quandle homomorphism (that is,$f(x*y)=f(x) \diamond f(y), \forall x,y \in X$) and $ x \prec_X y$ implies $f(x) \prec_Y f(y)$.

\begin{example}
	We use quandle structures defined in \cite{CES} over the real line $\mathbb{R}$ to give explicit structures of right and left orderable quandles. 	Consider the real line $\mathbb{R}$ with its natural ordering and with quandle operation $x*y=\alpha x +(1-\alpha)y$, where $\alpha \neq 0,1$
	\begin{enumerate}
		\item 
		 If $\alpha >0$, then the quandle $\mathbb{R} $ is right orderable.
		 \item
		 If $\alpha <1$, then the quandle $\mathbb{R} $ is left orderable.
		 \item
		 Thus, if $0< \alpha <1$, then the quandle $\mathbb{R} $ is bi-orderable.

	\end{enumerate}

\end{example}

\section{circular orderability of quandles}\label{Circular}

A {\it circular ordering} on a set $T$ is a map $c: T\times T\times T \rightarrow \{ -1, 0, 1\}$ satisfying:
\begin{enumerate}
	\item If $(t_1, t_2, t_3) \in T^3$ then $c(t_1, t_2, t_3) = 0$ if and only if $\{t_1, t_2, t_3\}$ are not all distinct;
	\item For all $t_1, t_2, t_3, t_4 \in T$ we have
	\[ c(t_1, t_2, t_3) - c(t_1, t_2, t_4) + c(t_1, t_3, t_4)-c(t_2, t_3, t_4) = 0.
	\] 
\end{enumerate}
A set $T$ with a circular ordering is called {\it circularly orderable}. A group $G$ is called {\it left-circularly orderable} (respectively {\it right-circularly orderable}) if it admits a circular ordering $c: G\times G\times G \rightarrow \{ - 1, 0, 1\}$ such that $c(g_1,g_2, g_3)=c(gg_1,gg_2,gg_3)$ (respectively $c(g_1,g_2, g_3)=c(g_1g,g_2g,g_3g)$) for any $g, g_1, g_2$ and $g_3\in G$. In this case we say that the circular ordering $c$ is left-invariant (respectively right-invariant). 

We say that a quandle $Q$ is {\it left-circularly orderable} (respectively {\it right-circularly orderable}) if it admits a circular ordering $c: Q\times Q\times Q \rightarrow \{ - 1, 0, 1\}$ such that $c(s_1,s_2, s_3)=c(s*s_1,s*s_2,s*s_3)$ (respectively $c(s_1,s_2, s_3)=c(s_1*s,s_2*s,s_3*s)$) for any $s, s_1, s_2$ and $s_3\in Q$. 

\begin{lemma}\label{ordering}
	A left (respectively right) orderable quandle is also left (respectively right) circularly orderable.
\end{lemma}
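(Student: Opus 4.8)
The plan is to promote the given linear ordering to a circular ordering by recording, for each triple of distinct elements, the sign of the permutation that sorts them. Concretely, suppose $Q$ is right orderable with right-ordering $\prec$. For a triple $(s_1,s_2,s_3)\in Q^3$ I set $c(s_1,s_2,s_3)=0$ if $s_1,s_2,s_3$ are not all distinct, and otherwise $c(s_1,s_2,s_3)=\operatorname{sgn}(\sigma)$, where $\sigma$ is the unique permutation of $\{1,2,3\}$ with $s_{\sigma(1)}\prec s_{\sigma(2)}\prec s_{\sigma(3)}$. Equivalently, $c$ takes the value $+1$ on the cyclic rotations of an increasing triple and $-1$ on the remaining three arrangements.

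First I would record two formal properties of $c$ that are immediate from the definition: it vanishes exactly when its arguments fail to be pairwise distinct, which is axiom (1) of a circular ordering; and it is totally antisymmetric, since transposing two arguments composes $\sigma$ with a transposition and hence flips $\operatorname{sgn}(\sigma)$. The one substantive point, and the step I expect to be the main obstacle, is the cocycle identity (2), that $c(s_1,s_2,s_3)-c(s_1,s_2,s_4)+c(s_1,s_3,s_4)-c(s_2,s_3,s_4)=0$ for all $s_1,s_2,s_3,s_4$. I would dispatch it using the antisymmetry just established. A short direct check shows that the left-hand side, regarded as a function $\delta c$ of the four arguments, is itself totally antisymmetric; consequently it vanishes whenever two of the $s_i$ coincide, and for four distinct elements it equals $\operatorname{sgn}(\pi)$ times its value on the $\prec$-increasing rearrangement. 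But when $s_1\prec s_2\prec s_3\prec s_4$ each of the four subtriples is already increasing, so every $c$-term equals $+1$ and the alternating sum is $1-1+1-1=0$. Hence $\delta c\equiv 0$ and $c$ is a genuine circular ordering. (One could instead enumerate the orderings of four distinct points, but the antisymmetry reduction avoids the casework.)

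It remains to verify right-invariance, which is now straightforward. For any $s\in Q$ the right translation $R_s(t)=t*s$ is a quandle automorphism, hence a bijection, and because $\prec$ is a right-ordering it is strictly order-preserving; thus for distinct $s_1,s_2,s_3$ the triple $(s_1*s,s_2*s,s_3*s)$ is again distinct and carries exactly the same relative $\prec$-order as $(s_1,s_2,s_3)$. Therefore the sorting permutation is unchanged and $c(s_1,s_2,s_3)=c(s_1*s,s_2*s,s_3*s)$, which is precisely right circular orderability. The left case is identical after replacing $R_s$ by $L_s(t)=s*t$: a left-ordering makes each $L_s$ strictly order-preserving and hence injective, and an order-preserving injection into a totally ordered set preserves the relative order of any triple, so the same computation yields left-invariance of the circular ordering induced by a left-ordering.
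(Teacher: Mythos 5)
Your proof is correct and takes essentially the same approach as the paper: you construct the identical circular ordering $c_{\prec}$ (your sign-of-the-sorting-permutation formula is exactly the paper's case-by-case definition of $c_{\prec}$), and you prove invariance by the same observation that multiplication by a fixed element preserves the defining inequalities. The only difference is that you verify the cocycle identity in detail via the antisymmetry reduction, a step the paper compresses into ``by definition, $c_{\prec}$ is a circular ordering''; your check is valid and fills in that omitted verification.
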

\begin{proof}
	Let $\prec$ be a left ordering on a quandle $Q$ then define the map $c_{\prec}:Q^3 \rightarrow \{ - 1, 0, 1\}$ by 
	$$(1)\;\;\;\; c_{\prec}(s_1,s_2,s_3)=\left\{\begin{array}{cl} 1 & \text{if $s_1\prec s_2\prec s_3$ or $s_2\prec s_3\prec s_1$ or $s_3\prec s_1\prec s_2$}
		\\ -1  &  \text{if $s_1\prec s_3\prec s_2$ or $s_3\prec s_2\prec s_1$ or $s_2\prec s_1\prec s_3$  }  
		\\ 0 & \text{otherwise.}
	\end{array}
	\right.$$ 
	
	By definition, $c_{\prec}$ is a circular ordering. Now, it is left to check that $c_{\prec}$ is invariant under left-multiplication. Since $Q$ is left orderable, multiplying on the left by any element $s\in Q$ will not change the inequalities of $(1).$ Similarly, if a quandle $Q$ is right orderable then the same proof show that $Q$ is right circularly orderable.
\end{proof}

\begin{proposition}
	Let $G$ be a circularly orderable group with a circular ordering $c$ which is left and right-invariant. Then, the quandle $\mathrm{Conj}(G)$ is a right circularly orderable quandle;

\end{proposition}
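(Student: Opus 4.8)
The plan is to verify that the given circular ordering $c$ on $G$ serves \emph{verbatim} as a circular ordering witnessing the right circular orderability of $\mathrm{Conj}(G)$. Since the underlying set of $\mathrm{Conj}(G)$ is $G$ itself, and $c$ is by hypothesis a circular ordering on $G$, conditions (1) and (2) of the definition of a circular ordering hold automatically for $c$ regarded on $\mathrm{Conj}(G)$. Hence the only point left to establish is invariance under right multiplication in the quandle, namely
\[ c(s_1,s_2,s_3) = c(s_1 * s,\, s_2 * s,\, s_3 * s) \quad \text{for all } s, s_1, s_2, s_3 \in G, \]
where by definition of $\mathrm{Conj}(G)$ one has $s_i * s = s^{-1} s_i s$.

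The key step is a short computation exploiting that $c$ is \emph{simultaneously} left- and right-invariant. First I would apply left-invariance with the element $s$ to clear the leading factors $s^{-1}$:
\[ c(s^{-1}s_1 s,\, s^{-1}s_2 s,\, s^{-1}s_3 s) = c(s_1 s,\, s_2 s,\, s_3 s). \]
Then I would apply right-invariance with the element $s^{-1}$ to clear the trailing factors $s$:
\[ c(s_1 s,\, s_2 s,\, s_3 s) = c(s_1,\, s_2,\, s_3). \]
Chaining these two equalities yields exactly $c(s_1 * s,\, s_2 * s,\, s_3 * s) = c(s_1, s_2, s_3)$, which is the required invariance of $c$ under the right multiplications $R_s$ of $\mathrm{Conj}(G)$.

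The conceptual content of the argument is simply that the right multiplication $R_s$ in $\mathrm{Conj}(G)$ is the conjugation map $g \mapsto s^{-1} g s$, which factors as the composition of a left translation by $s^{-1}$ and a right translation by $s$; bi-invariance of $c$ therefore forces conjugation-invariance. Accordingly I do not anticipate any genuine obstacle here: the proposition is immediate once one recognizes this factorization and has both invariance hypotheses in hand. The only care I would take is to invoke \emph{both} invariance properties explicitly, since either one alone would fail to cancel the conjugating factors on both sides.
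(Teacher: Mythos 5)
Your proposal is correct and follows essentially the same route as the paper: take $c$ itself as the circular ordering on $\mathrm{Conj}(G)$ and verify right-invariance by observing that conjugation-invariance follows from combining left- and right-invariance of $c$. The only difference is cosmetic — you spell out the two-step cancellation (left-translate by $s$, then right-translate by $s^{-1}$) which the paper leaves implicit in the single equality $c(g^{-1}g_1g, g^{-1}g_2g, g^{-1}g_3g)=c(g_1,g_2,g_3)$.
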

\begin{proof}
	Let $d: \mathrm{Conj}(G)\times \mathrm{Conj}(G)\times \mathrm{Conj}(G) \rightarrow \{ -1, 0, 1\}$ defined by $d(g_1, g_2, g_3)=c(g_1, g_2, g_3)$ for any $g_1, g_2$ and $g_3\in G$. Let $g_1, g_2$ and $g_3\in G$, since $d(g_1, g_2, g_3)=c(g_1, g_2, g_3)$, $d$ is a circular ordering on 
		$\mathrm{Conj}(G)$ as a set. Now it is left to show the right-invariant condition. Let $g, g_1, g_2$ and $g_3\in G$,
		we have that $d(g_1*g, g_2*g, g_3*g)=d(g^{-1}g_1g, g^{-1}g_2g, g^{-1}g_3g)=c(g^{-1}g_1g, g^{-1}g_2g, g^{-1}g_3g)=c(g_1, g_2, g_3)$. Hence, $\mathrm{Conj}(G)$ is a right circularly orderable quandle.

\end{proof}

Recall that a group $G$ is called bi-circularly orderable (respectively bi-orderable) if it admits a circular ordering (respectively an ordering) which is left and right-invariant. Similarly, we say that a quandle $Q$ is bi-circularly orderable  if it admits a circular ordering (respectively an ordering) which is left and right-invariant. 
The notion of bi-circular orderability for groups have been studied since the $1950$'s, and in $1959$ Swierczkowski had given in \cite{Sw} a complete classification of these type of groups.

\begin{theorem}{\cite{Sw}}\label{bio}
    If a group $G$ is bi-circularly orderable, then there exists a bi-orderable group $\Gamma$ such that $G$ is a subgroup of $\Gamma\times \mathbb{S}^1$. Moreover, the bi-circular order on $G$ is determined by the natural bi-circular order on $\Gamma\times \mathbb{S}^1$.
\end{theorem}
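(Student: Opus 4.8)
The plan is to reduce the statement to the classical structure theory of linearly bi-ordered groups by passing to a central extension. The starting point is that a bi-invariant circular order, through the four-term cocycle identity in the definition of a circular ordering, produces an integer-valued $2$-cocycle on $G$: fixing a reference element and recording how often a triple ``wraps around'' yields a function $f\colon G\times G\to\mathbb{Z}$ whose coboundary vanishes precisely because of relation (2). First I would use $f$ to build a central extension
\[
1\longrightarrow \mathbb{Z}\stackrel{z}{\longrightarrow}\widetilde{G}\stackrel{\pi}{\longrightarrow} G\longrightarrow 1,
\]
where $z$ is a central generator of the kernel, and equip $\widetilde G$ with the linear order in which the powers of $z$ interleave with the lift of the circular order on $G$.

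The key claim of this first step is that $\widetilde G$ is not merely left-orderable but \emph{bi}-orderable, and that $z$ is a positive, central, cofinal element whose associated circular order on $\widetilde G/\langle z\rangle=G$ is exactly $c$. Left-invariance of the constructed order is the usual lifting argument; the extra input is that $c$ is \emph{right}-invariant as well, which I would feed in to show the linear order on $\widetilde G$ is preserved by right translations too. Because $z$ is central, conjugation fixes it, and this is what keeps the two-sided invariance compatible with the wrapping by $z$.

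Next I would extract the circle factor. Since $z$ is central, positive and cofinal, the convex subgroups of $\widetilde G$ form a chain and there is a largest convex subgroup $\Gamma$ with $z\notin\Gamma$; it is normal (convex subgroups of bi-ordered groups are normal) and is bi-ordered as a subgroup. The quotient $\widetilde G/\Gamma$ is archimedean, so by H\"older's theorem it embeds order-preservingly in $(\mathbb{R},+)$, and after rescaling the image of $z$ is $1$. This gives a homomorphism $\rho\colon\widetilde G\to\mathbb{R}$ with $\rho(z)=1$ and $\ker\rho=\Gamma$. I would then assemble an order-embedding $\widetilde G\hookrightarrow \Gamma\times\mathbb{R}$, where $\Gamma\times\mathbb{R}$ carries the lexicographic bi-order with the $\mathbb{R}$-coordinate primary, sending $z$ to $(e,1)$. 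Reducing modulo the central copy $\{e\}\times\mathbb{Z}$ and recalling that $\mathbb{S}^1=\mathbb{R}/\mathbb{Z}$ identifies $\Gamma\times\mathbb{S}^1=(\Gamma\times\mathbb{R})/\mathbb{Z}$ with its natural bi-circular order; the embedding then descends to an embedding $G=\widetilde G/\langle z\rangle\hookrightarrow\Gamma\times\mathbb{S}^1$ carrying $c$ to the induced circular order, which is exactly the assertion.

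The step I expect to be the main obstacle is constructing the order-embedding $\widetilde G\hookrightarrow\Gamma\times\mathbb{R}$, that is, splitting off the archimedean top class $\widetilde G/\Gamma\subseteq\mathbb{R}$ as a genuine $\mathbb{R}$-coordinate. When $\widetilde G/\Gamma$ is infinite cyclic, generated by the image of $z$, the centrality of $z$ gives a direct-product decomposition at once; in general $\widetilde G/\Gamma$ may be a dense subgroup of $\mathbb{R}$ and the extension of $\widetilde G/\Gamma$ by $\Gamma$ need not split, so a bare set-section will fail to be a homomorphism. I would handle this either by a Hahn-type embedding that realizes $\widetilde G$ inside a lexicographic product whose top factor is $\mathbb{R}$, or by enlarging $\Gamma$ to a bi-ordered group absorbing the non-split part; checking that the resulting map is simultaneously a group homomorphism, an order-embedding, and compatible with the central $z$ is the delicate point, and it is precisely here that the centrality and cofinality of $z$ must be used in full.
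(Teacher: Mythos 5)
The paper itself contains no proof of this statement: Theorem \ref{bio} is quoted directly from Swierczkowski \cite{Sw}, so your outline has to be judged against the classical argument rather than against anything in the text. Your skeleton does match that argument, and steps 1 and 2 (the integer cocycle, the central extension $1\to\mathbb{Z}\to\widetilde{G}\to G\to 1$ with $z$ central, positive and cofinal, the bi-invariance of the lifted order, the maximal convex subgroup $\Gamma$ avoiding $z$, and H\"older's theorem applied to $\widetilde{G}/\Gamma$) are essentially correct. But the step you flag as ``the main obstacle'' is not a technical loose end to be patched; it is the entire content of the theorem, and neither of your proposed repairs closes it. Demanding that the embedding $\widetilde{G}\hookrightarrow\Gamma'\times\mathbb{R}$ send $z$ to $(e,1)$ forces the first coordinate to be a group homomorphism $\widetilde{G}\to\Gamma'$ killing $z$, hence factoring through $G=\widetilde{G}/\langle z\rangle$ itself; so what you must actually construct is a bi-orderable group $\Gamma'$ and a homomorphism $G\to\Gamma'$ whose restriction to (the image of) $\Gamma$ is an order-embedding. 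Without the normalization $z\mapsto(e,1)$ the lexicographic embedding is trivial --- $g\mapsto(g,\rho(g))\in\widetilde{G}\times\mathbb{R}$ works --- but useless, since quotienting by the image of $\langle z\rangle$ then fails to produce $\Gamma'\times\mathbb{S}^1$. Your first repair, a ``Hahn-type embedding,'' is unavailable: Hahn's theorem is a theorem about \emph{abelian} ordered groups, and $\widetilde{G}$ is non-abelian in general (for instance, if $c=c_\prec$ comes from a bi-order on a free group $F_2$, then $\widetilde{G}\cong F_2\times\mathbb{Z}$). Your second repair, ``enlarging to absorb the non-split part,'' fails in its natural concrete form: completing the central $\mathbb{Z}$ to $\mathbb{R}$, i.e.\ forming $H=(\widetilde{G}\times\mathbb{R})/\langle(z,-1)\rangle$, already breaks for $G=\mathbb{Z}/4$ with its standard circular order, where $\widetilde{G}=\mathbb{Z}$, $z=4$, and $H\cong\mathbb{Z}/4\times\mathbb{R}$ has torsion, hence admits no linear order at all. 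The torsion of $G$ (equivalently, the lifts $x$ with $x^n\in\langle z\rangle$) must be routed into the circle factor rather than into $\Gamma'$, and arranging that --- bi-ordering a suitable torsion-free quotient of $G$ compatibly with both the given order on $\Gamma$ and the conjugation action --- is exactly the work that your outline postpones and never performs.

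Separately, one assertion you lean on is false as stated: convex subgroups of bi-ordered groups need \emph{not} be normal. In the wreath product $(\bigoplus_{i\in\mathbb{Z}}\mathbb{Z})\rtimes\mathbb{Z}$ (shift action) with the lexicographic bi-invariant order, the convex subgroup of base elements supported on indices $\le 0$ is not invariant under conjugation by the shift. What saves your argument is not that general principle but the centrality of $z$: one checks that $\Gamma=\{g\in\widetilde{G}\;:\;z^{-1}<g^n<z\ \text{for all}\ n\geq 1\}$, and since conjugation preserves the order and fixes $z$, it preserves this set. So the normality of $\Gamma$, and with it the application of H\"older's theorem, does hold --- but for a different reason than the one you give; and even with that repaired, the proposal as written does not contain a proof of the theorem.
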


By the natural bi-circular order it is meant that if you equip $\Gamma \times \mathbb{S}^1$ with the lexicographic bi-circular ordering, then the bi-circular ordering on $G$ is just the restriction of this lexicographic circular ordering on $\Gamma \times \mathbb{S}^1$ to the subgroup $G$. The Goursat's Lemma Give a classification of all the subgroups of $\Gamma \times \mathbb{S}^1$. Hence, Theorem \ref{bio} gives infinitely many example of bi-circularly orderable groups that are not bi-orderable.

Let $\{Q_i, *_i\}_{\i\in I}$ be a family of quandles. Then the Cartesian product $Q=\prod_{i \in I} Q_i$ with the operation $(x_i)\star (y_i)=(x_i*_iy_i)$ is a quandle called the {\it quandle product}.

\begin{proposition}
The conjugation quandle of any bi-circularly orderable group $G$
 is right orderable.
 \end{proposition}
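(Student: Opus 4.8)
The plan is to invoke the structure theorem of Swierczkowski, Theorem~\ref{bio}: since $G$ is bi-circularly orderable, there is a bi-orderable group $\Gamma$ together with a group embedding $G \hookrightarrow \Gamma \times \mathbb{S}^1$, under which the bi-circular order on $G$ is the restriction of the natural one on $\Gamma \times \mathbb{S}^1$. I would then produce a right ordering on the conjugation quandle of the ambient group $\Gamma \times \mathbb{S}^1$ and simply restrict it to $\mathrm{Conj}(G)$. This reduction is legitimate because a group embedding induces a quandle embedding $\mathrm{Conj}(G) \hookrightarrow \mathrm{Conj}(\Gamma \times \mathbb{S}^1)$ realizing $\mathrm{Conj}(G)$ as a subquandle, and the restriction of a right ordering to any subquandle is again a right ordering: totality is preserved under restriction, and since a subquandle is closed under $*$, the invariance implication $s \prec t \Rightarrow s*r \prec t*r$ continues to hold inside the subquandle.

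The key observation is that $\mathbb{S}^1$ is abelian, so conjugation in the second factor is trivial: for $(\gamma_1, z_1), (\gamma_2, z_2) \in \Gamma \times \mathbb{S}^1$ one computes $(\gamma_1, z_1) * (\gamma_2, z_2) = (\gamma_2^{-1}\gamma_1 \gamma_2,\, z_1)$. Hence $\mathrm{Conj}(\Gamma \times \mathbb{S}^1)$ is isomorphic, as a quandle, to the quandle product $\mathrm{Conj}(\Gamma) \times T$, where $T$ is the trivial quandle on the underlying set of $\mathbb{S}^1$. This is precisely where the hypothesis is used in an essential way: $\Gamma \times \mathbb{S}^1$ is itself \emph{not} bi-orderable, since $\mathbb{S}^1$ has torsion while orderable groups are torsion-free, so one cannot appeal directly to the bi-orderable case. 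The torsion is quarantined in the trivial factor $T$, which conjugation does not see.

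It then remains to right-order each factor and take the lexicographic order on the product. Since $\Gamma$ is bi-orderable, its bi-ordering is both left- and right-invariant, and this same total order makes $\mathrm{Conj}(\Gamma)$ right orderable (the conjugation-of-bi-orderable-group fact recalled in the introduction): for $g \prec g'$, left-invariance gives $h^{-1}g \prec h^{-1}g'$ and then right-invariance gives $h^{-1}g h \prec h^{-1}g' h$, i.e. $g * h \prec g' * h$. The trivial quandle $T$ is right orderable under any total order on $\mathbb{S}^1$, since $s * r = s$ renders the invariance condition vacuous. Finally, if $(Q_1, \prec_1)$ and $(Q_2, \prec_2)$ are right orderable quandles, the lexicographic order on $Q_1 \times Q_2$ is a right ordering: for $(a_1,a_2) \prec (b_1,b_2)$ we either have $a_1 \prec_1 b_1$, whence $a_1 *_1 r_1 \prec_1 b_1 *_1 r_1$ and the first coordinates already decide the comparison, or $a_1 = b_1$ with $a_2 \prec_2 b_2$, whence $a_1 *_1 r_1 = b_1 *_1 r_1$ while $a_2 *_2 r_2 \prec_2 b_2 *_2 r_2$. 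Applied to $\mathrm{Conj}(\Gamma) \times T$ this yields a right ordering on $\mathrm{Conj}(\Gamma \times \mathbb{S}^1)$, and restricting it to the subquandle $\mathrm{Conj}(G)$ finishes the proof.

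I expect the only genuine subtlety to be the conceptual step of the second paragraph, namely recognizing that conjugation collapses the non-orderable factor $\mathbb{S}^1$ to a trivial quandle; once this is in hand the remaining verifications are routine. The bookkeeping items to check carefully are that Swierczkowski's map is a group embedding (so that $\mathrm{Conj}(G)$ truly sits inside $\mathrm{Conj}(\Gamma \times \mathbb{S}^1)$ as a subquandle) and that right orderings restrict to subquandles. In particular no topology on $\mathbb{S}^1$ is required, only a choice of total order on its underlying set.
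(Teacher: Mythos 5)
Your proposal is correct and follows essentially the same route as the paper: invoke Swierczkowski's theorem, decompose $\mathrm{Conj}(\Gamma\times\mathbb{S}^1)$ as $\mathrm{Conj}(\Gamma)\times\mathrm{Conj}(\mathbb{S}^1)$ with the second factor trivial because $\mathbb{S}^1$ is abelian, right-order the product, and restrict to the subquandle $\mathrm{Conj}(G)$. The only difference is that you prove the product and restriction steps directly (lexicographic ordering), where the paper cites \cite[Proposition 4.4]{RSS} and leaves the restriction implicit.
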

\begin{proof}
Let $H$ be any bi-orderable group. Since $\mathrm{Conj}(H\times \mathbb{S}^1)=\mathrm{Conj}(H)\times \mathrm{Conj}(\mathbb{S}^1)$, and $\mathbb{S}^1$ is commutative, so $\mathrm{Conj}(\mathbb{S}^1)$ is right orderable as trivial quandle. Since $H$ is bi-orderable, $\mathrm{Conj}(H)$ is right orderable. Hence, the quandle $\mathrm{Conj}(H\times \mathbb{S}^1)=\mathrm{Conj}(H)\times \mathrm{Conj}(\mathbb{S}^1)$ is right orderable by \cite[Proposition 4.4 ]{RSS}. Therefore, the result follows from Theorem \ref{bio}. 
\end{proof}

\begin{lemma}
    If $G$ is a nontrivial group with at least three elements, then the quandle $\mathrm{Conj}(G)$ is not left circularly orderable.
\end{lemma}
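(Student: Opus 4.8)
The plan is to exploit a degeneracy of the conjugation operation at the identity element $e$ of $G$. In $\mathrm{Conj}(G)$ one has $e * h = h^{-1} e h = e$ for every $h \in G$, so $e$ behaves as a \emph{left-absorbing} element for the quandle operation. This single observation, combined with the non-degeneracy axiom (1) in the definition of a circular ordering, should immediately rule out any left-invariant circular ordering, without ever needing the cocycle identity (axiom (2)).

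Concretely, I would argue by contradiction. Suppose $c \colon \mathrm{Conj}(G)^3 \to \{-1,0,1\}$ is a left-invariant circular ordering, so that $c(s_1, s_2, s_3) = c(s * s_1, s * s_2, s * s_3)$ for all $s, s_1, s_2, s_3 \in G$. The key step is to specialize the invariance to the fixed element $s = e$. Since $e * s_i = s_i^{-1} e s_i = e$ for each $i$, the right-hand side collapses to $c(e, e, e)$, and because this triple is not all distinct, axiom (1) gives $c(e,e,e) = 0$. Hence $c(s_1, s_2, s_3) = 0$ for \emph{every} triple $(s_1, s_2, s_3)$.

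To finish, I would invoke the hypothesis $|G| \geq 3$ to choose three pairwise distinct elements $s_1, s_2, s_3$; for such a triple axiom (1) forces $c(s_1, s_2, s_3) = \pm 1$, contradicting the conclusion that $c$ vanishes identically. Therefore no left-invariant circular ordering can exist, and $\mathrm{Conj}(G)$ is not left circularly orderable.

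As for the difficulty, there is essentially no computational obstacle here. The only points requiring care are: (a) checking the conjugation convention $g * h = h^{-1} g h$ so that it is genuinely the identity $e$ that is left-absorbing (and confirming this is what left-invariance probes, just as right-invariance in the earlier Proposition probes $g_i * g = g^{-1} g_i g$); and (b) making sure the hypothesis actually produces a pairwise-distinct triple, which is precisely where ``at least three elements'' is used, the word ``nontrivial'' being subsumed by the cardinality assumption.
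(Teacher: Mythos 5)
Your proposal is correct and matches the paper's own proof: both arguments apply left-invariance at $s=e$, use that $e*s_i = s_i^{-1}es_i = e$ collapses every triple to $c(e,e,e)=0$, and then contradict axiom (1) using three pairwise distinct elements guaranteed by $|G|\geq 3$. The only difference is presentational — you spell out the final contradiction step that the paper leaves implicit.
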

\begin{proof}
Let $G$ be any nontrivial group with at least three elements. Consider the conjugation quandle $\mathrm{Conj}(G)$ with operation $x*y=y^{-1}xy$. By contradiction, let $c:\mathrm{Conj}(G)^3 \rightarrow \{ \pm 1, 0\}$ be a left circular ordering.  Then $c(s_1,s_2,s_3)=c(e*s_1,e*s_2, e*s_3)=c(e,e,e)=0$, where $e$ is the identity element of $G$.  Thus the function $c$ is the constant zero map implying that $\mathrm{Conj}(G)$ cannot be have a left circular ordering.
\end{proof}

\begin{example}
	The trivial quandle with \emph{two} elements is bi-circularly orderable since the trivial circular ordering which is always equal to zero is both left and right invariant. Since this quandle is not left orderable, this show that the notion of circular orderability for quandles is different to the notion orderability.
	
\end{example}

\begin{example}
	Consider the three element quandle $X=\{1,2,3\}$ with orbit decomposition $\{1,2\} \sqcup \{3\}$.  It's Cayley table is given by
	$\begin{bmatrix} 
		1 & 1 & 2 \\
		2 & 2 & 1 \\
		3& 3 & 3 
	\end{bmatrix}.$  The $(i,j)$-entry of this matrix correspond to the element $i*j$ in the quandle. We show that this quandle is neither left nor right circularly orderable.\\
\noindent
  (I) Now assume that $c$ is left-invariant circular ordering on $X$, then for any pairwise distinct elements $x,y,z \in X$, we have $c(x,y,z)=c(3*x,3*y,3*z)=c(3,3,3)=0,$ and thus the quandle $X$ can't have a left-invariant circular ordering.\\
	(II) Now assume that $c$ is right-invariant circular ordering on $X$, then we have $c(1,2,3)=c(1*3,2*3,3*3)=c(2, 1, 3)$. Since $c$ is a circular ordering, it satisfies condition $(2)$ of the definition, so
	\[ c(1, 2, 1) - c(1, 2, 3) + c(1, 1, 3)-c(2, 1, 3) = 0.
	\]
Hence, $c(1,2,3)=-c(2,1,3)$	 which is a contradiction to the fact that $$c(1,2,3)=c(1*3,2*3,3*3)=c(2, 1, 3).$$

\end{example}

\begin{example}
	Consider the dihedral quandle $X=\mathbb{Z}_3$, where $x*y=2y-x=2y+2x$. We show that this quandle is neither left nor right circularly orderable. The quandle operation satisfies $x*y=y*x$ for all $x,y \in 	X$, and then left circular orderings and right circular orderings are the same. By contradiction assume that there exists a right circular ordering $c$ on $X$, and let $x,y,z \in  X$.  If at least two of $x,y,z$ are equal then by definition $c(x,y,z)=0$.  Now assume that $x,y,z$ are pairwise distinct. Thus the product of any two of them gives the third element.  Then $c(x,y,z)=c(x*x,y*x,z*x)=c(x,z,y)$, $c(x,y,z)=c(x*y,y*y,z*y)=c(z,y,x)$ and $c(x,y,z)=c(x*z,y*z,z*z)=c(y,x,z)$.  We thus obtain $c(x,y,z)=c(x,z,y)=c(z,y,x)=c(y,x,z)$ and consequently  any of the 6 permutations of the set $\{x,y,z\}$ leaves the value of $c(x,y,z)$ unchanged.
Since $c$ is a circular ordering, it satisfies condition $(2)$ of the definition, so
	\[ c(x, y, x) - c(x, y, z) + c(x, x, z)-c(y, x, z) = 0.
	\]
Hence, $c(x,y,z)=-c(y,x,z)$	 which is a contradiction to the fact that	
$$c(x,y,z)=c(y,x,z).$$ Therefore, the dihedral quandle $X=\mathbb{Z}_3$ with quandle operation $x*y=2y-x=2y+2x$ is neither left nor right circular orderable.

\end{example}


\section{The space of circular orderings of quandles}\label{Space}
In this section we study the set of circular orderings of a quandle. Let $Q$ be a quandle, we denote by $RCO(Q)$ (respectively $LCO(Q)$) the set of right circular orderings on $Q$ (respectively the set of left circular orderings on $Q$).
Since a right ordering (respectively left ordering) on a quandle gives a right circular ordering (respectively left circular ordering) by Lemma \ref{ordering}, the set of right orderings $RO(Q)$ (respectively left orderings $LO(Q)$) on a quandle $Q$ can be seen as a subset of the set of left circular orderings $LCO(Q)$. The set of left-orderings (respectively right orderings) on a quandle was already studied in \cite{DDHPV} and they showed that for any magma $\mathcal{M}$, the set of left orderings $LO(M)$ (respectively right orderings $RO(M)$) is a compact topological space. A magma is a set with a binary operation $\cdot :\mathcal{M}\times\mathcal{M}\longrightarrow\mathcal{M}.$ So, a quandle is in fact a magma. 

We topologize the set $RCO(Q)$ (respectively $LCO(Q)$) as a subspace of the space $\{-1, 0, 1\}^{Q^3}$ of all maps from $Q^3$ to $\{-1, 0, 1\}$ with the Tychonoff topology. We define $$\Gamma(Q):=\{S=(x, y, z)\in Q^3\; |\; x=y\; {\rm or} \; y=z\; {\rm or}\; x=z\},$$ $$R_S=\{c\in RCO(Q)\;|\; c(S)=1\}$$ and $$L_S=\{c\in LCO(Q)\;|\; c(S)=1\}$$ for any $S\in Q^3\setminus \Gamma(Q)$.

\begin{lemma}
    The set $\{R_S\}_{S\in Q^3\setminus\Gamma(Q)}$ (respectively $\{L_S\}_{S\in Q^3\setminus\Gamma(Q)}$) is a subbasis for the topology on $RCO(Q)$ (respectively $LCO(Q)$).
\end{lemma}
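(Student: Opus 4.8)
The plan is to compare the proposed family $\{R_S\}$ with the standard subbasis that the product (Tychonoff) topology on $\{-1,0,1\}^{Q^3}$ induces on the subspace $RCO(Q)$. Since $\{-1,0,1\}$ carries the discrete topology, that standard subbasis consists of the cylinder sets
\[ V_{S,v} := \{ c \in RCO(Q) \mid c(S) = v \}, \qquad S \in Q^3,\ v \in \{-1,0,1\}. \]
It therefore suffices to establish two things: first, that each $R_S$ is open in $RCO(Q)$, which is immediate because $R_S = V_{S,1}$ is itself one of these cylinder sets; and second, that every $V_{S,v}$ lies in the topology generated by the family $\{R_S\}_{S \in Q^3 \setminus \Gamma(Q)}$. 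Once both hold, the two collections generate the same topology and $\{R_S\}$ is a subbasis.

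The first reduction is to dispose of the degenerate and the zero cases. If $S \in \Gamma(Q)$, then by axiom (1) of a circular ordering every $c$ satisfies $c(S) = 0$; hence $V_{S,0} = RCO(Q)$ and $V_{S,1} = V_{S,-1} = \emptyset$, all of which belong to any topology. If $S \in Q^3 \setminus \Gamma(Q)$, then the three coordinates of $S$ are distinct, so $c(S) \neq 0$ for every $c$ and thus $V_{S,0} = \emptyset$, while $V_{S,1} = R_S$ by definition. The only remaining case is $V_{S,-1}$ for $S \notin \Gamma(Q)$.

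The heart of the argument, and the step I expect to require the most care, is to rewrite $V_{S,-1}$ as some $R_T$. For this I will first extract from the cocycle axiom (2) the antisymmetry of a circular ordering. Substituting $t_4 = t_2$ into the cocycle identity and using that $c$ vanishes on triples with a repeated entry collapses it to $c(t_1,t_2,t_3) + c(t_1,t_3,t_2) = 0$; that is, transposing the last two arguments negates $c$. (Substituting $t_4 = t_1$ similarly yields cyclic invariance, though only the transposition rule is needed here.) Consequently, for $S = (x,y,z) \notin \Gamma(Q)$ and its transpose $S' = (x,z,y)$, which again has distinct coordinates and so lies in $Q^3 \setminus \Gamma(Q)$, we have $c(S') = -c(S)$, and since $c(S) \in \{-1,1\}$ this gives
\[ V_{S,-1} = \{ c \mid c(S) = -1 \} = \{ c \mid c(S') = 1 \} = R_{S'}. \]

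Putting the pieces together, every standard subbasic set $V_{S,v}$ is either $\emptyset$, the whole space $RCO(Q)$, or of the form $R_T$ for some $T \in Q^3 \setminus \Gamma(Q)$; hence each $V_{S,v}$ lies in the topology generated by $\{R_S\}$, so that topology contains the subspace topology, while the reverse inclusion was noted at the outset. This proves $\{R_S\}_{S \in Q^3 \setminus \Gamma(Q)}$ is a subbasis for $RCO(Q)$. The argument for $\{L_S\}$ on $LCO(Q)$ is verbatim the same, as it invokes only the circular-ordering axioms and not the invariance side, so both assertions follow at once.
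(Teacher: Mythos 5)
Your proof is correct and follows essentially the same route as the paper's: both compare $\{R_S\}$ with the cylinder-set subbasis of the Tychonoff topology, discard the $V_{S,0}$ sets (equal to $RCO(Q)$ or $\emptyset$ according as $S\in\Gamma(Q)$ or not), and absorb each $V_{S,-1}$ into some $R_{S'}$ via a transposition of coordinates. The only difference is that you explicitly derive the antisymmetry relation $c(t_1,t_2,t_3)=-c(t_1,t_3,t_2)$ from the cocycle axiom, a step the paper uses without justification.
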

\begin{proof}
    Since the $\{-1, 0, 1\}^{Q^3}$ is a Cantor set with a subbasis 
    $\{V_{s, -1}, V_{S, 0}\; {\rm and }\; V_{S, 1}\}_{S\in Q^3}$ Where $$V_{S, -1}=\{f:Q^3\rightarrow \{-1, 0, 1\}\; |\; f(S)=-1\},$$
    $$V_{S, 0}=\{f:Q^3\rightarrow \{-1, 0, 1\}\; |\; f(S)=0\}$$
    and $$V_{S, 1}=\{f:Q^3\rightarrow \{-1, 0, 1\}\; |\; f(S)=1\}.$$
    
    Since $RCO(Q)\cap V_{S, 0}=RCO(Q)$ (respectively $LCO(Q)\cap V_{S, 0}=LCO(Q)$) for any $S\in \Gamma(Q)$ and $RCO(Q)\cap V_{S, 0}=\emptyset$ (respectively $LCO(Q)\cap V_{S, 0}=\emptyset$) for any $S\in Q^3\setminus\Gamma(Q)$, we can throw out all sets of the form $RCO(Q)\cap V_{S, 0}$.
    
    Since $RCO(Q)\cap V_{S, -1}=RCO(Q)\cap V_{\tau.S, 1}$ (respectively $LCO(Q)\cap V_{s, -1}=LCO(Q)\cap V_{\tau.S, 1}$) for any transposition $\tau$, we can also throw out all sets of the form $RCO(Q)\cap V_{S, -1}$ (respectively $LCO(Q)\cap V_{\tau.S, 1}$).
    
\end{proof}
\begin{theorem}\label{compact}
    The space $RCO(Q)$ (respectively $LCO(Q)$) is compact.
\end{theorem}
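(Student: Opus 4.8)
The plan is to realize $RCO(Q)$ (and symmetrically $LCO(Q)$) as a closed subset of the ambient space $\{-1,0,1\}^{Q^3}$, and then invoke the fact that a closed subspace of a compact space is compact. First I would recall that $\{-1,0,1\}$ is a finite, hence compact, discrete space, so by Tychonoff's theorem the product $\{-1,0,1\}^{Q^3}$ equipped with the Tychonoff topology is compact. Since $RCO(Q)$ carries the subspace topology by definition, it then suffices to prove that $RCO(Q)$ is closed in $\{-1,0,1\}^{Q^3}$.

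To establish closedness, I would exhibit $RCO(Q)$ as an intersection of sets, each cut out by a condition that involves only finitely many coordinate evaluations and is therefore clopen in the product topology. Concretely, $RCO(Q)$ is the set of all $c\in\{-1,0,1\}^{Q^3}$ satisfying simultaneously: (a) for every $S\in\Gamma(Q)$ the degeneracy condition $c(S)=0$, i.e. $c\in V_{S,0}$; (b) for every $S\in Q^3\setminus\Gamma(Q)$ the nondegeneracy condition $c(S)\neq 0$, i.e. $c\in V_{S,-1}\cup V_{S,1}$; (c) for every $(t_1,t_2,t_3,t_4)\in Q^4$ the cocycle condition
\[
c(t_1,t_2,t_3)-c(t_1,t_2,t_4)+c(t_1,t_3,t_4)-c(t_2,t_3,t_4)=0;
\]
and (d) for every $(s,s_1,s_2,s_3)\in Q^4$ the right-invariance condition $c(s_1,s_2,s_3)=c(s_1*s,s_2*s,s_3*s)$. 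Each of the conditions (a)--(d) constrains $c$ only through its values at a fixed finite list of points of $Q^3$ (one point for (a) and (b), four points for (c), and at most two points for (d)). Since every coordinate projection $\{-1,0,1\}^{Q^3}\to\{-1,0,1\}$ is continuous onto a discrete target, each such condition defines a clopen subset, being a finite Boolean combination of subbasic sets of the form $V_{S,i}$.

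Finally I would take the intersection: because $RCO(Q)$ equals the intersection over all instances of the conditions (a)--(d), and an arbitrary intersection of closed sets is closed, $RCO(Q)$ is closed in $\{-1,0,1\}^{Q^3}$, hence compact. The argument for $LCO(Q)$ is identical, replacing the right-invariance condition (d) by its left counterpart $c(s_1,s_2,s_3)=c(s*s_1,s*s_2,s*s_3)$. The one point that deserves genuine care --- and which I regard as the crux rather than a deep obstacle --- is the observation that every defining axiom of a circular ordering, together with the invariance requirement, is a \emph{finitary} condition on the coordinates; closedness thus reduces to the stability under arbitrary intersections of finite Boolean combinations of clopen sets, and no limiting or completeness argument on $Q$ itself is required, precisely because the product topology renders pointwise-defined constraints closed.
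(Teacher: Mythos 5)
Your proposal is correct and follows essentially the same route as the paper's own proof: both realize $RCO(Q)$ (and $LCO(Q)$) as an intersection of sets cut out by the degeneracy/nondegeneracy, cocycle, and invariance conditions --- each a closed (indeed clopen) finitary condition inside the compact product $\{-1,0,1\}^{Q^3}$ --- and conclude that a closed subspace of a compact space is compact. The paper labels these three families of conditions $A$, $B$, $C$ and asserts their closedness directly, while you spell out the clopen-via-finite-Boolean-combinations justification, but the argument is the same.
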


\begin{proof}

We have that \begin{multline*}RCO(Q)=\{c: Q^3 \rightarrow \{ -1, 0, 1\} \;|\; {\rm if}\; (t_1, t_2, t_3) \in Q^3\; {\rm then}\; c(t_1, t_2, t_3) = 0\\ {\rm if\; and\; only\; if}\; (t_1, t_2, t_3)\in \Gamma(Q);\\ {\rm for\; all}\; (t_1, t_2, t_3, t_4) \in Q^4\;\\ {\rm we\; have}\; c(t_1, t_2, t_3) - c(t_1, t_2, t_4) + c(t_1, t_3, t_4)-c(t_2, t_3, t_4) = 0;\\ {\rm and }\; c(t_1*t, t_2*t, t_3*t)=c(t_1,t_2,t_3)\; {\rm for\; any}\; (t, t_1,t_2,t_3)\in Q^4\}\end{multline*}

Let 

\begin{align*}
    A:&=\{c: Q^3 \rightarrow \{ -1, 0, 1\} \;|\; {\rm if}\; S=(t_1, t_2, t_3) \in Q^3\; {\rm then}\; c(t_1, t_2, t_3) = 0\; {\rm if\; and\; only\; if}\;\\ &S\in \Gamma(Q)\}\\
    &= (\bigcap_{S\in \Gamma(Q)}V_{S, 0})\cap(\bigcap_{S\in Q^3\setminus\Gamma(Q)}(V_{S,-1}\cup V_{S,1})),\\
\end{align*}

\begin{align*}
    B:&=\{c: Q^3 \rightarrow \{ -1, 0, 1\} \;|\;{\rm for\; all}\; W=(t_1, t_2, t_3, t_4) \in Q^4\; {\rm we\; have}\\ &d_W(c)=c(t_1, t_2, t_3) - c(t_1, t_2, t_4) + c(t_1, t_3, t_4)-c(t_2, t_3, t_4) = 0\}\\
    &=\bigcap_{W\in Q^4}\{c\in \{-1, 0, 1\}^{Q^3}\;|\;  d_W(c)=0\}
\end{align*}

where for any $W=(t_1, t_2, t_3, t_4)\in Q^4$ we define $d_W: \{-1, 0, 1\}^{Q^3}\longrightarrow \mathbb{Z}$ by 
$$d_W(c)=c(t_1, t_2, t_3) - c(t_1, t_2, t_4) + c(t_1, t_3, t_4)-c(t_2, t_3, t_4)$$
and 
\begin{align*}
    C:&=\{c: Q^3 \rightarrow \{ -1, 0, 1\} \;|\; c(t_1*t, t_2*t, t_3.t)=c(t_1,t_2,t_3)\; {\rm for\; any}\; (t, t_1,t_2,t_3)\in Q^4\}\\
    &=\bigcap_{t\in Q, (t_1, t_2, t_3)\in Q^3}\{c\in \{-1, 0, 1\}^{Q^3}\;|\; c(t_1*t, t_2*t, t_3*t)=c(t_1,t_2,t_3)\}.
\end{align*}
Since $RCO(Q)=A\cap B\cap C$ and the subsets $V_{S, 0}$, $V_{S,-1}$, $V_{S,1}$, $\{c\in \{-1, 0, 1\}^{Q^3}\;|\; d_W(c)=0\}$ and $\{c\in \{-1, 0, 1\}^{Q^3}\;|\; c(t_1*t, t_2*t, t_3*t)=c(t_1,t_2,t_3)\}$ are closed, $RCO(Q)$ is closed in $ \{-1, 0, 1\}^{Q^3}$. Therefore, $RCO(Q)$ is compact.

A similar proof show that $LCO(Q)$ is compact.
\end{proof}
Given a quandle $Q$, recall that a topology on $LO(Q)$ is constructed in \cite{DDHPV} by choosing as a subbasis the collection $\{V_{(a, b)}\}_{(a,b)\in Q\times Q\setminus \Delta}$, where $\Delta=\{(a, a)\in Q\;|\; a\in Q\}$, and $V_{(a,b)}=\{<\in LO(Q)\;|\; a<b\}$. Similarly, a topology on $RO(Q)$ is constructed by choosing as a subbasis the collection $\{V_{(a, b)}\}_{(a,b)\in Q\times Q\setminus \Delta}$, where $\Delta=\{(a, a)\in Q\;|\; a\in Q\}$ and  $V_{(a,b)}=\{<\in RO(Q)\;|\; a<b\}$.

\begin{proposition}
\label{embedding}
The inclusion map $i:LO(Q) \rightarrow LCO(Q)$ (respectively $j:RO(Q) \rightarrow RCO(Q)$) given by 
\[ < \mapsto c_<,
\]
is an embedding.
\end{proposition}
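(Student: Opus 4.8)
The plan is to treat $i$ and $j$ symmetrically and to exploit the compactness of $LO(Q)$ (respectively $RO(Q)$) already recorded from \cite{DDHPV}, together with the fact that $LCO(Q)$ is Hausdorff, being a subspace of the Cantor cube $\{-1,0,1\}^{Q^3}$. Since a continuous injection from a compact space into a Hausdorff space is automatically a homeomorphism onto its image (indeed a closed embedding), it suffices to establish two things: that $i$ is continuous, and that $i$ is injective. I will carry out the argument for $i:LO(Q)\to LCO(Q)$, the case of $j$ being identical after exchanging left and right. Well-definedness is already available: by Lemma \ref{ordering}, for each $\prec\,\in LO(Q)$ the map $c_{\prec}$ defined by $(1)$ is a left-invariant circular ordering, so $i(\prec)=c_{\prec}\in LCO(Q)$.

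For continuity it is enough to check that the preimage of each subbasic open set $L_S$, with $S=(s_1,s_2,s_3)\in Q^3\setminus\Gamma(Q)$, is open in $LO(Q)$. Unwinding the definition $(1)$ of $c_{\prec}$, one has
\[
i^{-1}(L_S)=\{\prec\,:\,c_{\prec}(s_1,s_2,s_3)=1\}=(V_{(s_1,s_2)}\cap V_{(s_2,s_3)})\cup(V_{(s_2,s_3)}\cap V_{(s_3,s_1)})\cup(V_{(s_3,s_1)}\cap V_{(s_1,s_2)}),
\]
a finite union of finite intersections of subbasic open sets of $LO(Q)$, hence open. Thus $i$ is continuous.

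The main obstacle is injectivity, that is, recovering the ordering $\prec$ from the circular ordering $c_{\prec}$. The idea is that, for distinct $a,b$, the relation of the pair in $\prec$ ought to be readable from the family of values $c_{\prec}(a,b,d)$ as $d$ ranges over the remaining elements of $Q$: fixing these signs pins down, for each third element $d$, whether $d$ lies between $a$ and $b$ or outside them, and comparing this data for two orderings should force them to coincide. I expect this to be the delicate point precisely because a circular ordering forgets a basepoint, so two orderings differing by a "cyclic rotation" of the line could a priori induce the very same $c_{\prec}$; ruling this out is where the left-invariance of the orderings and the supply of available third elements must genuinely be used, and is the crux of the argument. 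Once injectivity is secured, the compact-to-Hausdorff principle shows that $i$ is a homeomorphism onto its image, hence an embedding, and the identical reasoning on the right gives the statement for $j$.
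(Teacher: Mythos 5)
Your continuity argument and your concluding appeal to the compact-to-Hausdorff principle are exactly the paper's proof: the paper computes $i^{-1}(L_S)$ (and $j^{-1}(R_S)$) as the same union $(V_{(x,y)}\cap V_{(y,z)})\cup(V_{(y,z)}\cap V_{(z,x)})\cup(V_{(z,x)}\cap V_{(x,y)})$ of subbasic sets of $LO(Q)$ (respectively $RO(Q)$), and then invokes compactness of $LO(Q)$ and $RO(Q)$ together with the Hausdorff property of $\{-1,0,1\}^{Q^3}$. The genuine gap in your proposal is the point you yourself call the crux: injectivity of $<\;\mapsto c_<$ is never proved, only described as something that left-invariance and the availability of third elements ``should'' force. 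A proof whose central step is deferred is not a proof, so as written the proposal is incomplete.

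Moreover, this is not a gap that can be filled by more care: the injectivity you are hoping for fails in general, for exactly the reason you suspected (cyclic rotation of the line). Take $Q=\{1,2,3\}$ to be the trivial quandle, $x*y=x$. Every right multiplication $R_y$ is the identity map, so \emph{every} strict total ordering of $Q$ is a right ordering; in particular both $1<2<3$ and its rotation $2<3<1$ lie in $RO(Q)$. These are distinct orderings, but they induce the same circular ordering, since a circular ordering on a three-element set is determined (via the cyclic invariance and antisymmetry consequences of condition $(2)$) by its value on the single triple $(1,2,3)$, and both orderings give $c(1,2,3)=1$. Hence $j:RO(Q)\rightarrow RCO(Q)$ is not injective for this quandle, so no argument from the stated hypotheses alone can secure injectivity. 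You should be aware that the paper's own proof has the identical defect: it simply refers to ``both of the injective maps $i$ and $j$'' without any justification, so your instinct that injectivity is the delicate point is correct; what your proposal misses is that the difficulty is not merely delicate but fatal without additional hypotheses on $Q$ (for instance, hypotheses ruling out right orderings that are rotations of one another).
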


\begin{proof}
Recall that the collection $\{L_S\}_{S\in Q^3\setminus\Gamma(Q)}$ (respectively $\{R_S\}_{S\in Q^3\setminus\Gamma(Q)}$) is a subbasis for the topology on $LCO(Q)$ (respectively $RCO(Q)$), where $$R_S=\{c\in RCO(Q)\;|\; c(S)=1\}$$ and $$L_S=\{c\in LCO(Q)\;|\; c(S)=1\}$$ for any $S=(x, y, z)\in Q^3\setminus \Gamma(Q)$.

For any $S=(x, y, z)\in Q^3\setminus \Gamma(Q)$, we have that
\begin{align*}
    i^{-1}(L_S)&=\{<\in LO(Q)\;|\; c_<(S)=1\}\\
    &=\{<\in LO(Q)\;|\; x<y<z\; {\rm or}\; y<z<x\; {\rm or} \; z<x<y\}\\
    &=(V_{(x,y)}\cap V_{(y,z)})\bigcup (V_{(y,z)}\cap V_{(z,x)})\bigcup (V_{(z,x)}\cap V_{(x,y)})
\end{align*}
and 
\begin{align*}
    i^{-1}(R_S)&=\{<\in RO(Q)\;|\; c_<(S)=1\}\\
    &=\{<\in RO(Q)\;|\; x<y<z\; {\rm or}\; y<z<x\; {\rm or} \; z<x<y\}\\
    &=(V_{(x,y)}\cap V_{(y,z)})\bigcup (V_{(y,z)}\cap V_{(z,x)})\bigcup (V_{(z,x)}\cap V_{(x,y)}).
\end{align*}
Therefore, both of the injective maps $i$ and $j$ are continuous. Since both $LO(Q)$ and $RO(Q)$ are compact and $\{-1, 0, 1\}^{Q^3}$ is Hausdorff, the maps $i$ and $j$ are embeddings.
\end{proof}
\begin{corollary}
The inclusion map $k:BO(Q)= LO(Q)\cap RO(Q)\longrightarrow LCO(Q)$ (respectively $k':BO(Q)= LO(Q)\cap RO(Q)\longrightarrow RCO(Q)$) is an embedding.
\end{corollary}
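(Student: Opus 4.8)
The plan is to recognize $k$ and $k'$ as composites of maps that are already known to be embeddings, and then apply the standard facts that the inclusion of a subspace is an embedding and that a composite of embeddings is an embedding. First I would record the topological set-up: both $LO(Q)$ and $RO(Q)$ sit inside the common ambient space of strict total orderings on $Q$, topologized by the subbasis $\{V_{(a,b)}\}_{(a,b)\in Q\times Q\setminus\Delta}$. The set $BO(Q)=LO(Q)\cap RO(Q)$ therefore inherits a subspace topology, and the two set-theoretic inclusions $\iota_L\colon BO(Q)\hookrightarrow LO(Q)$ and $\iota_R\colon BO(Q)\hookrightarrow RO(Q)$ are topological embeddings, because the subspace topology that $BO(Q)$ acquires from $LO(Q)$ (respectively from $RO(Q)$) coincides with the one it acquires from the ambient ordering space.

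Next I would simply write $k=i\circ\iota_L$ and $k'=j\circ\iota_R$, where $i\colon LO(Q)\to LCO(Q)$ and $j\colon RO(Q)\to RCO(Q)$ are the embeddings provided by Proposition~\ref{embedding}. Since a composite of two embeddings is again an embedding, both $k$ and $k'$ are embeddings, which is exactly the assertion of the corollary.

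Should a self-contained argument be preferred, I would instead imitate the proof of Proposition~\ref{embedding} directly. The map $k$ is injective, being the restriction of the injective map $i$, and the same preimage computation as in that proposition, namely
\[
k^{-1}(L_S)=\bigl((V_{(x,y)}\cap V_{(y,z)})\cup(V_{(y,z)}\cap V_{(z,x)})\cup(V_{(z,x)}\cap V_{(x,y)})\bigr)\cap BO(Q),
\]
shows that the preimage of each subbasic open set of $LCO(Q)$ is open in $BO(Q)$, so $k$ is continuous; likewise for $k'$. To promote a continuous injection to an embedding I would observe that $BO(Q)$ is compact: since $RO(Q)$ is closed in the ambient ordering space, $BO(Q)=LO(Q)\cap RO(Q)$ is a closed subspace of the compact space $LO(Q)$, hence compact. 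A continuous injection from a compact space into the Hausdorff space $LCO(Q)$ is automatically a homeomorphism onto its image, giving the embedding.

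The argument is essentially routine, and the only point that genuinely needs care is the bookkeeping of topologies, i.e.\ confirming that the topology placed on $BO(Q)$ is the subspace topology induced from the ambient space of orderings, so that $\iota_L$ and $\iota_R$ are bona fide embeddings and the subbasic preimage computation above is valid. Once this identification is made, no real obstacle remains.
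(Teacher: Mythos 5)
Your proposal is correct and matches the paper's intended argument: the paper states this corollary without proof, as an immediate consequence of Proposition~\ref{embedding}, and your decomposition $k=i\circ\iota_L$, $k'=j\circ\iota_R$ (subspace inclusion followed by the embeddings of that proposition) is exactly that argument. Your alternative self-contained version, using compactness of $BO(Q)$ and the Hausdorff property of $\{-1,0,1\}^{Q^3}$, is also sound but is not needed beyond the composite-of-embeddings observation.
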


We end this section by asking few questions about the notion of circular orderability for quandles.

\begin{question}
For exactly which quandles $Q$, the space $RCO(Q)$ (respectively $LCO(Q)$) is finite?
\end{question}
\begin{question}
Is there a characterization of the notion of right circular orderability (respectively left circular orderability) of a quandle in term of quandles actions?
\end{question}

\begin{question}
For exactly which quandles $Q$, the embeddings $i:LO(Q) \rightarrow LCO(Q)$ and $j:RO(Q) \rightarrow RCO(Q)$ are homeomorphisms?
\end{question}

\end{document}